\numberwithin{equation}{section}
\def\dim{\operatorname{dim}}
\def\diam{\operatorname{diam}}
\def\sing{\operatorname{sing}}
\def\modulus{\operatorname{mod}}
\def\N{\mathbb{N}}
\def\C{\mathbb{C}}
\def\CC{\widehat{\mathbb C}}
\def\R{\mathbb{R}}
\def\S{\mathcal{S}}
\def\B{\mathcal{B}}
\newtheorem{theorem}{Theorem}[section]
\newtheorem{lemma}{Lemma}[section]
\theoremstyle{remark}
\newtheorem{remark}{Remark}[section]
\begin{document}
\title{The Hausdorff dimension of Julia sets of meromorphic functions in the Speiser class}
\author{Walter Bergweiler and Weiwei Cui\thanks{W. Cui expresses his gratitude to the Centre for Mathematical Sciences of Lund University for providing a nice working environment.}}
\date{}
\maketitle
\begin{abstract}
We show that for each $d\in (0,2]$ there exists a meromorphic function $f$ such that 
the inverse function of $f$ has three singularities and the Julia set of $f$ has 
Hausdorff dimension $d$.
\end{abstract}

\section{Introduction and main results} \label{intro}
We will be concerned with the iteration of transcendental meromorphic functions
$f\colon\C\to\CC:=\C\cup\{\infty\}$. The main objects studied here
are the Fatou set $F(f)$, consisting of all  $z\in \mathbb C$ for which the
iterates  $f^k$   of $f$ are  defined and form  a normal family  in
some neighborhood of $z$, and the Julia set $J(f):=\CC\setminus F(f)$.
For an introduction to the dynamics of transcendental meromorphic functions we 
refer to~\cite{Bergweiler1993}.

The Speiser class $\S$ consists of all  transcendental meromorphic
functions $f$ for which the set $\sing(f^{-1})$ of singularities of the inverse,
i.e., the set of critical and asymptotic values of~$f$, is finite. 
More precisely, if $q$ denotes the cardinality of $\sing(f^{-1})$, then we write $f\in \S_q$.
Equivalently, $f\in \S$ if there exist finitely many points $a_1,\dots,a_q\in\CC$ such that 
\begin{equation} \label{3a}
f\colon \C\setminus f^{-1}(\{a_1,\dots,a_q\})\to \CC\setminus\{a_1,\dots,a_q\}
\end{equation} 
is a covering map. And if $q$ is the minimal number with this property, then $f\in\S_q$.
The monodromy theorem implies that we always have $q\geq 2$.

The Eremenko-Lyubich class $\B$ consists of all transcendental meromorphic functions $f$
for which $\sing(f^{-1})\setminus \{\infty\}$ is a bounded subset of~$\C$.
Thus
\begin{equation} \label{3b}
\S =\bigcup_{q=2}^\infty \S_q \subset \B.
\end{equation} 
Both the Speiser and the Eremenko-Lyubich class play an important role in 
transcendental dynamics.
The similarities and differences between these classes are addressed in 
a number of recent papers~\cite{AspenbergCui,Bishop2015,Bishop2015a,Bishop2017,Epstein2015}.
A survey of some of these as well as many other results concerning 
the dynamics of functions in $\S$ and $\B$ is given in~\cite{Sixsmith2018}.

Considerable attention has been paid to the Hausdorff dimension of Julia
sets; see~\cite{Kotus2008} and~\cite{Stallard2008} for surveys. 
We will denote the Hausdorff dimension of a subset $A$ of $\C$ by $\dim A$.

Baker~\cite{Baker1975} showed that if $f$ is a transcendental entire function, then
$J(f)$ contains continua so that $\dim J(f)\geq 1$.
Stallard~\cite[Theorem~1.1]{Stallard2000a} showed that for all $d\in(1,2)$ there exists
a transcendental entire function $f$ with $\dim J(f)=d$ while
Bishop~\cite{Bishop2018}
constructed an example with $\dim J(f)=1$.
Stallard's examples are actually in the Eremenko-Lyubich class~$\B$.
Previously she had shown that $\dim J(f)>1$ for entire $f\in\B$.
In particular, this is the case for entire functions in~$\S$.

Albrecht and Bishop~\cite{Albrecht2020}
showed that given $\delta>0$ there exists an entire function
$f\in \S$ such that $\dim J(f)<1+\delta$.
In fact, these were the first examples of entire functions in $\S$ for which the Julia set
has dimension strictly less than~$2$.
In their examples the inverse has three finite singularities.
Since every non-constant entire function has the asymptotic value $\infty$ by Iversen's theorem,
and since we include $\infty$ in $\sing(f^{-1})$, their examples are in~$\S_4$.

For transcendental meromorphic functions~$f$ we have
$\dim J(f)>0$ by a result of Stallard~\cite{Stallard1994}.
On the other hand, she showed~\cite[Theorem~5]{Stallard2000}
that for all $d\in (0,1)$ there exists a  transcendental meromorphic
function $f$ such that $\dim J(f)=d$. Again her examples are in~$\B$.
Together with her result covering the interval $(1,2)$ mentioned
above, and since $J(\exp z)=\C$ by a result of 
Misiurewicz~\cite{Misiurewicz1981} and $J(\tan z)=\R$,
it follows 
that for all $d\in (0,2]$ there exists $f\in\B$ such that $\dim J(f)=d$.

We shall show that such examples also exist in the Speiser class $\S$ and in fact in~$\S_3$.

\begin{theorem}\label{theorem1}
Let $d\in (0,2]$. Then there exists a function $f\in \S_3$ such that $\dim J(f)=d$.
\end{theorem}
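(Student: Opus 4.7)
I would attempt the theorem by adapting and combining the strategies of Stallard~\cite{Stallard2000,Stallard2000a} (who realised all $d\in(0,1)\cup(1,2)$ in $\B$) with the quasiconformal folding approach of Albrecht--Bishop~\cite{Albrecht2020} (who realised dimensions near~$1$ in $\S_4$ for entire functions). The overall shape of the plan is: pick a target dimension $d$, produce a meromorphic ``model'' $\B$-function whose Julia set has dimension $d$, and then use a Bishop-type surgery to collapse $\sing(f^{-1})$ down to exactly three points while keeping the dimension intact.

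I would split $(0,2]$ into three regimes: $d\in(0,1]$, $d\in(1,2)$, and $d=2$. For $d\in(1,2)$, I would start from Stallard's entire $\B$-model with prescribed dimension, and modify it to a meromorphic model by inserting a controlled array of poles (so that the dynamics and pressure estimates are not disturbed to leading order). For $d\in(0,1)$ I would begin with Stallard's meromorphic $\B$-model. In both cases the model lies a priori only in $\B$, since $\sing(f^{-1})$ is typically infinite, and the main step is to apply Bishop's quasiconformal folding to force the singular set down to three prescribed points (say $0$ and $\pm i$), producing $f\in\S_3$. The standard pressure/Bowen-type calculation controlling $\dim J$ in Stallard's work depends only on the coarse tract geometry, which is unaffected by folding up to controlled quasiconformal distortion, so the dimension is preserved.

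For the boundary $d=2$ I would instead look for a concrete example in $\S_3$ whose Julia set has full measure on $\CC$, for instance within a family like $f_\lambda(z)=\lambda\tan(z^2)$ or $\lambda\tan^3(z)$: a direct computation gives three singular values (a critical value at $0$ and two asymptotic values at $\pm i\lambda$ or $\mp i\lambda$), and for suitable $\lambda$ the Fatou set is empty. The case $d=1$ is the most delicate and would either be obtained by passing to a boundary of the $(0,1)$-construction, or by adapting Bishop's entire construction~\cite{Bishop2018} of $\dim J(f)=1$ with three singular values.

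The main obstacle is the simultaneous control of two incompatible requirements: precise Hausdorff-dimension matching, which wants a flexible many-parameter family and generally fails under quasiconformal perturbation, and membership in $\S_3$, which is a rigid three-point constraint on $\sing(f^{-1})$. The folding step must therefore be arranged so that the introduced Beltrami coefficient is concentrated on sets which do not carry the dimension of $J(f)$, or so that the induced quasiconformal distortion can be made arbitrarily small. Exactly this balance was the core technical step in~\cite{Albrecht2020}, and I expect the meromorphic counterpart here to be the hardest part of the proof, particularly because the inserted poles produce additional tracts that must be threaded through the folding without spawning new singular values.
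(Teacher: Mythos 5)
Your plan is entirely different from the paper's, and as written it has genuine gaps that go beyond ``the hard step is left for later.''

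The central obstacle you identify --- collapsing $\sing(f^{-1})$ to three points via Bishop-type quasiconformal folding while preserving Hausdorff dimension --- is not merely the hardest step; it is a step for which no meromorphic analogue exists. Bishop's folding theorem and its refinements in \cite{Bishop2015a,Bishop2017,Albrecht2020} are built for entire functions: one glues model tracts along $\R$ and checks the resulting quasiregular map is entire. Inserting poles destroys the topological structure on which that gluing relies, and you offer no replacement. Even granting a folding step, the assertion that dimension is preserved because ``the pressure calculation depends only on coarse tract geometry'' is not correct in general: quasiconformal maps do distort Hausdorff dimension (this is exactly what Astala's theorem, cited in the paper as~\eqref{1x}, quantifies), and Albrecht--Bishop had to work hard to make the dilatation small \emph{exactly on the part of the plane carrying the dimension}; you cannot take this for free, and the presence of poles produces new tracts where the dilatation is a priori uncontrolled. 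Separately, your proposed $d=2$ examples $\lambda\tan(z^2)$ and $\lambda\tan^3 z$ fail outright: both satisfy $f(0)=0$ with $f'(0)=0$ for every $\lambda$, so $0$ is a superattracting fixed point and $F(f)\neq\emptyset$. Getting $J(f)=\CC$ requires the singular orbits to land on poles, which is precisely why the paper takes $f(z)=iG(\pi z/2)$ with $f(0)=i$ and $f(i)=\infty$.

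For contrast, the paper avoids folding entirely. It starts from an \emph{explicit} elliptic function $G\in\S_3$, forms $H=\eta G^p$ and $h_m(z)=H(m\arcsin(z/m))$ (an order-zero function in $\S_3$ by Edrei--Fuchs), and considers the one-parameter family $f_\lambda(z)=h_m(\lambda z)$. Lower bounds near $2$ come from Kotus--Urba\'nski (Lemma~\ref{lemma3a}) as $p\to\infty$; upper bounds tending to $0$ come from the order-zero estimate (Lemma~\ref{lemma1}) as $\lambda\to 0$. The key step is then proving $\lambda\mapsto\dim J(f_\lambda)$ is \emph{continuous}: one builds a quasiconformal conjugacy between $f_\lambda$ and $f_\kappa$ by modifying the K\oe nigs coordinate at the attracting fixed point, spreading the Beltrami coefficient along backward orbits, straightening, and invoking the rigidity result for $\S_3$ (Lemma~\ref{lemma2}) to identify the straightened map with $f_\kappa$. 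The intermediate value theorem then covers $(0,8p/(4p+1)]$, hence all of $(0,2)$; $d=2$ is handled by the separate explicit example. This route is elementary in that it never leaves an explicit family, uses quasiconformality only for conjugation within the family (not surgery on the singular set), and the distortion is made small simply by letting $\kappa\to\lambda$. Your strategy could in principle become a proof, but you would first have to build a meromorphic folding theory and a quantitative dimension-preservation argument that currently do not exist, whereas the paper needs neither.
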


We note that if $f\in\S_2$, then $\dim J(f)>1/2$; see~\cite[Theorem 3.11]{Baranski1995}
and~\cite[Remark~3.2 and Section~4.3]{Mayer2009}.
On the other hand, Bara\'nski~\cite[Section~4]{Baranski1995} showed that 
for  $f_\lambda(z)=\lambda\tan z\in\S_2$ the function $\lambda\mapsto \dim J(f_\lambda)$
maps $(0,1]$ monotonically and continuously onto $(1/2,1]$.
It seems likely that for $d\in(1,2]$ there also exists $f\in\S_2$ such that $\dim J(f)=d$.
The main interest of Theorem~\ref{theorem1} thus lies in the case where
$0<d\leq 1/2$.

\section{Preliminary results} \label{prelim}
The escaping set $I(f)$ of a meromorphic function $f$ is defined 
as the set of all $z\in\C$ for which $f^n(z) \to \infty$ as $n\to\infty$.
We always have $I(f)\neq\emptyset$ and $J(f)=\partial I(f)$.
This was shown by Eremenko~\cite{Eremenko1989} for transcendental entire $f$ and
by Dom\'inguez~\cite{Dominguez1998} for transcendental mero\-morphic $f$.

For $f\in\B$ we have $I(f)\subset J(f)$.
This is due to Eremenko and Lyubich~\cite[Theorem~1]{Eremenko1992} for entire $f$ and 
to Rippon and Stallard~\cite[Theorem~A]{Rippon1999} for meromorphic~$f$.

Theorem~\ref{theorem1} complements the result of~\cite{AspenbergCui} where it was
shown that for all $d\in[0,2]$ there exists a meromorphic function $f\in\S$ such
that $\dim I(f)=d$.

An important concept in the theory of meromorphic functions is 
the order; see, e.g.,~\cite{Hayman1964}.
The following result was proved in~\cite[Theorem~1.1]{Bergweiler2012}.
\begin{lemma}\label{lemma5}
Let $f\in \B$ be of finite order~$\rho$.
Suppose that $\infty$ is not an asymptotic value of $f$ and that
there exists $M\in\N$ such that all but finitely many poles of $f$ have multiplicity 
at most~$M$. Then
\begin{equation} \label{1b}
\dim I(f)\leq \frac{2M\rho}{2+M\rho}.
\end{equation} 
\end{lemma}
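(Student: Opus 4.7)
The plan is to adapt the Eremenko--Lyubich logarithmic-change-of-variables framework to the meromorphic setting, and to bound $\dim I(f)$ by a covering argument in which the contribution at each scale is controlled by a finite-order pole count weighted by a Koebe-type shrinking estimate coming from the multiplicity bound. The hypothesis that $\infty$ is not an asymptotic value of $f$ is essential here: it forces every tract of $f$ over a punctured neighborhood of $\infty$ to be a bounded neighborhood of a pole. Concretely, for $R$ sufficiently large the connected components of $f^{-1}(\{|w|>R\})$ are disjoint bounded regions $T_p$, each containing exactly one pole $p$, and $f\colon T_p\to\{|w|>R\}$ is a proper map of degree equal to the multiplicity of~$p$.

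Two quantitative inputs drive the estimate. First, finite order $\rho$ yields, via Jensen's formula, the pole-count bound $n(r)=O(r^{\rho+\varepsilon})$ for every $\varepsilon>0$. Second, near a pole $p$ of multiplicity $m\leq M$ the local expansion $f(z)=c_p(z-p)^{-m}(1+o(1))$ gives, via Koebe's distortion theorem applied on suitable disks, the inverse-branch estimate
\begin{equation}
|\varphi'(w)|\leq C\,|w|^{-1-1/m}\leq C\,|w|^{-1-1/M}
\end{equation}
uniformly on $\{|w|>2R\}$ for the branch $\varphi\colon\{|w|>R\}\to T_p$, and for all but the finitely many exceptional poles of multiplicity exceeding $M$ (which can be absorbed into constants).

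Since $f\in\B$ we have $I(f)\subset J(f)$, and the escaping set equals the union over $k\geq 0$ of $f^{-k}(I_R(f))$, where $I_R(f)$ consists of points whose forward orbit stays in $\{|z|>R\}$; it thus suffices to bound $\dim I_R(f)$. I would cover $I_R(f)$ by cylinder sets indexed by itineraries $(p_1,\dots,p_n)$ of visited tracts and iterate the Koebe estimate above to obtain, for each cylinder, a diameter bound of the form $C^n\prod_{j=1}^n|w_j|^{-(M+1)/M}$, where $w_j=f^j(z)$. Forming the $s$-dimensional Hausdorff sum, organizing itineraries by the scales visited, and using the pole count to bound the number of tracts with pole of modulus comparable to $r$ by $O(r^{\rho+\varepsilon})$, one arrives at a series whose convergence reduces, after optimizing over the sequence of scales, to the condition $1/s\geq 1/(M\rho)+1/2$, equivalently $s\geq 2M\rho/(2+M\rho)$.

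The main obstacle is to make the distortion estimate uniform under iteration: the Koebe constants must not accumulate with $n$, which requires working with branches of $f^{-n}$ defined on slightly enlarged preimage domains on which univalence is preserved. Equally delicate is the optimization over itineraries that yields the sharp exponent $2M\rho/(2+M\rho)$ rather than a weaker bound; a naive one-scale application of the covering estimate only recovers the cruder $s\geq \rho M/(M+1)$, which can exceed $2$ when $\rho$ is large and thus gives no information. Once the composition-of-distortions step is in hand and the optimal scale balance is chosen, the dimension bound follows from the standard Hausdorff-measure-covering framework.
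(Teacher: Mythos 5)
Your high-level framework is the right one and does match the method of Bergweiler--Kotus that the paper invokes (and sketches in the proof of Lemma~\ref{lemma1}): since $\infty$ is not an asymptotic value, $f^{-1}(\{|w|>R\})$ decomposes into bounded components, one around each pole, and one covers $I_R(f)$ by itinerary cylinders and sums $s$-th powers of diameters. However, there is a genuine gap in the quantitative core. You assert an inverse-branch estimate $|\varphi'(w)|\leq C\,|w|^{-1-1/M}$ with a constant $C$ that is \emph{uniform over all poles}, and then drive the Hausdorff sum purely off the pole-count $n(r)=O(r^{\rho+\varepsilon})$. This cannot work. Near a pole $a_j$ of multiplicity $m_j$ one has $f(z)\sim\bigl(b_j/(z-a_j)\bigr)^{m_j}$, so the branch satisfies $|\varphi'(w)|\asymp |b_j|\,|w|^{-1-1/m_j}$: the local coefficient $|b_j|$ enters linearly and is not uniformly bounded under the hypotheses. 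Consequently the cylinder diameters carry a product of $|b_{p_j}|$-factors, and the $s$-sum over level-$n$ cylinders factors (up to constants) as the $n$-th power of
\begin{equation}
\sum_{j}\left(\frac{|b_j|}{|a_j|^{1+1/M}}\right)^{s},
\end{equation}
whose convergence for $s>\frac{2M\rho}{2+M\rho}$ is precisely the nontrivial input (Lemma~3.1 of Bergweiler--Kotus, cited as~\eqref{5c} in the paper). That estimate couples $|b_j|$ to $|a_j|$ using both finite order and the disjointness of the tracts; it is not a consequence of the pole count. Indeed, even if one grants $|b_j|=O(1)$, the pole-count argument you describe only yields convergence for $s(1+1/M)>\rho$, i.e. $s>\rho M/(M+1)$, which for $\rho>2$ is strictly worse than the claimed bound (and can exceed~$2$); no amount of ``optimizing over scales'' recovers the exponent $\frac{2M\rho}{2+M\rho}$ without controlling the $|b_j|$. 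There is also a small slip in the final reduction: the convergence condition should read $1/s< 1/(M\rho)+1/2$, i.e.\ $s>\frac{2M\rho}{2+M\rho}$, not the inequalities you wrote, though this is presumably a typo. The structural parts of your outline (disjoint tracts around poles, Koebe distortion on slightly enlarged domains to keep constants from accumulating under iteration, $I(f)\subset J(f)$ for $f\in\B$) are sound; the missing piece is the $b_j$-weighted summability, which is exactly where the hypothesis of finite order earns its keep.
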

We will not actually use this lemma. Instead we will use the following closely related
result, which can be proved by the same method.
\begin{lemma}\label{lemma1}
Let $f$ be as in the Lemma \ref{1b}. Suppose, furthermore, that $f(0)\neq 0$.
For $\lambda\in\C\setminus\{0\}$ define $f_\lambda(z)=f(\lambda z)$. Then
\begin{equation} \label{1a0}
\limsup_{\lambda\to 0} \dim J(f_\lambda) \leq \frac{2M\rho}{2+M\rho}.
\end{equation} 
\end{lemma}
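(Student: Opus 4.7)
The plan is to combine two ingredients: an elementary compactness argument that pushes $J(f_\lambda)$ out to infinity as $\lambda\to 0$, and a uniform version of the tract-based dimension estimate that underlies Lemma~\ref{lemma5}.

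First, because $f(0)\neq 0$, we have $f_\lambda(z)=f(\lambda z)\to f(0)$ and $f_\lambda'(z)=\lambda f'(\lambda z)\to 0$ uniformly on compact subsets of $\C$ as $\lambda\to 0$. A standard implicit-function-theorem argument then produces, for every sufficiently small $\lambda$, an attracting fixed point $z_\lambda$ of $f_\lambda$ near $f(0)$ with multiplier of order $\lambda$. Iterating $f_\lambda$ on any fixed compact set $K$ drives $K$ into arbitrarily small neighborhoods of $z_\lambda$ once $|\lambda|$ is small enough, so $K\subset F(f_\lambda)$. Consequently, for every $R>0$ there exists $\lambda_0(R)>0$ such that $J(f_\lambda)\subset\{|z|>R\}$ whenever $|\lambda|<\lambda_0(R)$. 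Since $J(f_\lambda)$ is completely invariant, this gives
\[J(f_\lambda)\subset\bigcap_{n\ge 0}f_\lambda^{-n}\bigl(\{|z|>R\}\bigr).\]

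Next I would adapt the proof of Lemma~\ref{lemma5} from~\cite{Bergweiler2012} to bound the Hausdorff dimension of the right-hand side. Choose $R_0$ with $\sing(f^{-1})\subset\{|w|\le R_0\}$; this also works for every $f_\lambda$ because $\sing(f_\lambda^{-1})=\sing(f^{-1})$. For $R>R_0$, the tracts of $f_\lambda$, that is, the components of $f_\lambda^{-1}(\{|w|>R\})$, are bounded neighborhoods of the poles of $f_\lambda$, and they coincide with the $1/\lambda$-rescalings of the corresponding tracts of $f$. In particular, the tract multiplicities are at most $M$ and the standard logarithmic-coordinate distortion estimates apply. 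Covering $\{|z|>R\}$ by dyadic annuli, counting tracts in each annulus using Jensen's formula together with the order hypothesis, estimating the $\alpha$-dimensional Hausdorff content of the preimage of $\{|z|>R\}$ inside each tract, and summing yields a geometric series that converges whenever $\alpha>2M\rho/(2+M\rho)$.

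The main obstacle is uniformity of the estimates in $\lambda$. The singular values, $R_0$, and the local tract geometry are independent of $\lambda$; the counting function satisfies $n(r,f_\lambda)=n(|\lambda|r,f)\le C(|\lambda|r)^{\rho+\varepsilon}$, so the $\lambda$-dependence enters only as a multiplicative constant that tends to $0$ with $|\lambda|$; and the inclusion from Step~1 guarantees that the full Julia set, not merely the escaping set $I(f_\lambda)$ handled by Lemma~\ref{lemma5}, is captured by the Cantor-like construction. Combining both steps yields $\dim J(f_\lambda)\le\alpha$ for every $\alpha>2M\rho/(2+M\rho)$ as soon as $|\lambda|$ is small enough, and taking $\limsup_{\lambda\to 0}$ followed by $\alpha\downarrow 2M\rho/(2+M\rho)$ gives~\eqref{1a0}.
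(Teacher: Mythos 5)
Your proposal is correct and follows essentially the same route as the paper's proof: first one uses $f(0)\neq 0$ to make $f_\lambda$ contract a large disk $D(0,3R)$ into itself for small $|\lambda|$, trapping $J(f_\lambda)$ in $\{|z|>R\}$ so that the iterated covering by pole neighborhoods captures the whole Julia set and not just $I(f_\lambda)$; then one reruns the Bergweiler--Kotus estimate, where the rescaling $a_j\mapsto a_j/\lambda$, $b_j\mapsto b_j/\lambda$ multiplies the relevant convergent series by a factor ($|\lambda|^{t/M}$ in the paper) tending to $0$. The only point worth stressing is that this smallness is not merely a matter of uniformity: it is what forces the per-level ratio in the iterated covering below $1$, which is essential here because, unlike in the escaping-set argument, one cannot pass to the tail of the series over poles.
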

\begin{proof}
We proceed as in~\cite{Bergweiler2012}.
Let $(a_j)$ be the sequence  of poles of $f$ and let $m_j$  be the   multiplicity of~$a_j$.
Let $b_j\in \C \setminus\{0\}$ be such that 
\begin{equation} \label{5a}
f(z)\sim\left(\frac{b_j}{z-a_j}\right)^{m_j} \quad \text{as}\ z\to a_j .
\end{equation} 
Let 
\begin{equation} \label{5b}
t>  \frac{2M \rho}{2+ M \rho}.
\end{equation} 
By~\cite[Lemma~3.1]{Bergweiler2012} we have 
\begin{equation} \label{5c}
\sum_{j=1}^\infty\left(\frac{|b_j|}{|a_j|^{1+1/M}}\right)^t < \infty .
\end{equation} 
Let $a_j^\lambda$ and $b_j^\lambda$ the corresponding values for~$f_\lambda$.
Then $a_j^\lambda=a_j/\lambda$ and $b_j^\lambda=b_j/\lambda$  so that
\begin{equation} \label{5d}
\sum_{j=1}^\infty\left(\frac{|b_j^\lambda|}{|a_j^\lambda|^{1+1/M}}\right)^t 
=|\lambda|^{t/M}\sum_{j=1}^\infty\left(\frac{|b_j|}{|a_j|^{1+1/M}}\right)^t .
\end{equation} 
As in~\cite{Bergweiler2012} we choose $R_0>|f(0)|$ such that 
$\sing(f_\lambda^{-1})=\sing(f^{-1})\subset D(0,R_0)$ and $R\geq 2^MR_0$. 
Here and in the following $D(a,r)$ denotes the open disk of radius $r$ around
a point $a\in\C$.
Choosing $\lambda$ small we can achieve that $f(D(0,3R))\subset D(0,R)$ 
so that $F(f_\lambda)\subset D(0,3R)$.

We put $B(R):=\{z\colon |z|>R\}\cup \{\infty\}$ and,
as in~\cite[pp.~5376f.]{Bergweiler2012},
consider the collection $E_l$ of all components $V$ of $f^{-l}(B(R))$ 
for which $f^k(V)\subset B(3R)$ for $0\leq k\leq l-1$.
Then $E_l$ is a cover of $J(f_\lambda)$ and we have
\begin{equation} \label{5f}
\sum_{V\in E_l}\!\left(\diam_{\chi}(V)\right)^t
 = \frac{1}{M}\!\left(\frac{32}{(2R)^{1/M}24}\right)^{\! t}
\left( M (2^{1/M} 24)^{t}
\sum_{j=1}^\infty
\!\left(\frac{|b_j^\lambda|}{|a_j^\lambda|^{1+1/M}}\right)^{\! t} \right)^{\! l}.
\end{equation} 
Here $\diam_{\chi}(V)$ denotes the spherical diameter of~$V$.
Together with~\eqref{5c} and~\eqref{5d} the last estimate yields that if $\lambda$
is sufficiently small, then 
\begin{equation} \label{5g}
\sum_{V\in E_l}\left(\diam_{\chi}(V)\right)^t\to  0
\quad\text{as}\ l\to\infty.
\end{equation} 
Hence $\dim J(f_\lambda)\leq t$ for small~$\lambda$.
\end{proof}

It follows from a recent result of Mayer and Urba\'nski~\cite{Mayer2021} that 
Lemma~\ref{lemma1} and~\eqref{1b} can be sharpened to 
\begin{equation} \label{1a}
\lim_{\lambda\to 0} \dim J(f_\lambda)=\dim I(f).
\end{equation} 
In fact, their result says that
 $\dim I(f)$ is the infimum of the set of all $t>0$ for which~\eqref{5c} holds.

For entire functions, the following result can be found in~\cite[Section~3]{Eremenko1992}
and~\cite[Proposition~2.3]{Epstein2015}.
\begin{lemma}\label{lemma2}
Let $f,g\in\S_3$ and suppose that  there exist homeomorphisms
$\psi\colon \CC\to\CC$ and $\phi\colon\C\to\C$
such that $\psi\circ f=g\circ\phi$.
Then there exist a fractional linear transformation $\alpha\colon\CC\to\CC$
and an affine map $\beta\colon \C\to\C$ such that
$\alpha\circ f=g\circ \beta$.
\end{lemma}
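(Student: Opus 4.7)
The plan is to exploit the rigidity of $\CC$ punctured at three points in order to upgrade the purely topological conjugacy $\psi\circ f=g\circ\phi$ to a holomorphic one. Write $A=\sing(f^{-1})=\{a_1,a_2,a_3\}$ and $B=\sing(g^{-1})=\{b_1,b_2,b_3\}$. First I would observe that the relation $\psi\circ f=g\circ\phi$ forces $\psi$ to restrict to a bijection $A\to B$: critical (resp.\ asymptotic) values of $f$ are carried to critical (resp.\ asymptotic) values of $g$, since $\phi$ is a homeomorphism of $\C$ and $f,g$ are holomorphic. After a preliminary composition if necessary, both $\psi$ and $\phi$ may be taken orientation-preserving, which again follows from the holomorphy of $f$ and $g$. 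I would then let $\alpha\colon\CC\to\CC$ be the unique fractional linear transformation with $\alpha(a_i)=\psi(a_i)$ for $i=1,2,3$. The composition $\tilde\psi:=\alpha^{-1}\circ\psi$ is an orientation-preserving self-homeomorphism of $\CC$ fixing each $a_i$, hence isotopic to the identity rel $A$; I would pick such an isotopy $(H_t)_{t\in[0,1]}$ with $H_0=\mathrm{id}$, $H_1=\tilde\psi$, and set $K_t:=\alpha\circ H_t$, so that $K_1=\psi$, $K_0=\alpha$, and $K_t(A)=B$ throughout.

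Next I would construct $\beta$ by lifting the homotopy $t\mapsto K_t\circ f\colon\C\to\CC$ through $g$, starting from the known lift $\phi=\phi_1$ of $K_1\circ f$. On $\C\setminus f^{-1}(A)$ the image lies in $\CC\setminus B$, where $g$ restricts to a covering map by \eqref{3a}, so the standard homotopy lifting property produces a continuous family $\phi_t\colon\C\setminus f^{-1}(A)\to\C\setminus g^{-1}(B)$ with $g\circ\phi_t=K_t\circ f$. Setting $\beta:=\phi_0$ then yields a continuous map with $g\circ\beta=\alpha\circ f$ on the complement of the singular preimages. The delicate point is to extend $\phi_t$ continuously across $f^{-1}(A)$: near a point $z_0$ with $f(z_0)=a_i$ one must match the local ramification degree of $f$ at $z_0$ with that of $g$ at the corresponding point of $g^{-1}(b_{\sigma(i)})$. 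Since $\phi_1=\phi$ is a homeomorphism the matching holds at $t=1$, and because $K_t$ fixes the image $b_{\sigma(i)}$ throughout the isotopy, the lifts $\phi_t(z_0)$ are locally constant in $t$ and the matching propagates. This produces a continuous $\beta\colon\C\to\C$ satisfying $\alpha\circ f=g\circ\beta$, isotopic to $\phi$ and hence itself a homeomorphism.

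Finally I would argue that $\beta$ is affine. Away from the discrete set where $f$ ramifies or $\beta$ meets a critical point of $g$, a local inverse branch of $g$ is available and $\beta$ coincides with $g^{-1}\circ\alpha\circ f$ on a neighbourhood; hence $\beta$ is locally holomorphic there. Continuity of $\beta$ together with Riemann's theorem on removable singularities then upgrades $\beta$ to an entire function on $\C$. An injective entire function is affine: by Picard's great theorem it cannot have an essential singularity at $\infty$, which would force it to assume most values infinitely often, so $\beta$ is a polynomial, and injectivity forces degree one. I expect the main obstacle to be the topological bookkeeping in the lifting step, in particular the continuous extension of the lifted homotopy across the preimages of the singular values with matching local degrees; the holomorphic rigidity at the end is comparatively routine.
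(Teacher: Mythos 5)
Your approach is essentially the paper's: replace $\psi$ by an isotopic fractional linear transformation $\alpha$ relative to the three singular values, lift the resulting isotopy of $\CC$ through the covering $g$ to a family on $\C$ starting from $\phi$, extend continuously across $f^{-1}(\sing(f^{-1}))$, and observe that the endpoint $\beta$ is a holomorphic injection of $\C$, hence affine. The only notable difference is that you justify the first step directly from the triviality of the pure mapping class group of the thrice-punctured sphere, whereas the paper invokes \cite[Observation 1.10]{Epstein2015}, which encapsulates the same fact. Two small imprecisions worth noting: the claim that a ``preliminary composition'' reduces to orientation-preserving $\psi,\phi$ does not work with $g$ held fixed (conjugating by $z\mapsto\bar z$ replaces $g$ by $\overline{g(\bar z)}$), though this is harmless since the lemma is only applied to quasiconformal maps, and the paper's proof likewise treats the orientation-preserving case; and to conclude that $\beta=\phi_0$ is a homeomorphism you should appeal to the \emph{isotopy} lifting property rather than mere homotopy lifting, as the paper does. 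Finally, the propagation step across $f^{-1}(A)$ is phrased somewhat circularly (``the lifts $\phi_t(z_0)$ are locally constant in $t$'' before $\phi_t(z_0)$ is defined); the paper instead controls the lift on a small disc $D$ on which $f$ is proper and then passes to the limit $z\to z_0$, which is the cleaner way to make your idea precise.
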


\begin{proof}
Since $f\in\S_3$ it follows from \cite[Observation 1.10]{Epstein2015} that
there exists a fractional linear transformation $\alpha:\CC\to\CC$ which is isotopic 
to $\psi$ relative to $\sing(f^{-1})$. 
We follow the argument in the proof of \cite[Proposition 2.3(a)]{Epstein2015}. 
Let $(\psi_t)_{t\in [0,1]}$ be the isotopy between $\psi_0=\psi$ and $\psi_1=\alpha$. 
By the isotopy lifting property, there exists a unique isotopy $(\phi_t)_{t\in[0,1]}$ in
\begin{equation} \label{1c}
U:=f^{-1}\!\left(\CC\setminus\sing(f^{-1})\right)
\end{equation} 
such that $\phi_0=\phi$ and $\psi_t\circ f=g\circ \phi_t$ in $U$ for all $t\in[0,1]$.
It remains to show 
that $\phi_t$ extends continuously to the preimages of the singular values of $f$ and 
coincides with $\phi$ there for all $t$.

Let $z_0\in\C$ be a preimage of $v_0\in\sing(f^{-1})$. We have to show
that $\phi_t(z)\to \phi(z_0)$
as $z\to z_0$. We take a small neighborhood $D$ of $z_0$ such that
$f\colon D\to f(D)$ is a proper map, with no critical points except possibly at $z_0$.
We may also assume that $f(z)\neq v_0$ for $z\in\overline{D}\setminus \{z_0\}$.
It then suffices to show that $\phi_t(z)\in\phi(D)$ if $z$ is sufficiently close to $z_0$.
By the continuity of $f$ and the properties of an isotopy, we have
\begin{equation} \label{1c1}
\psi_t(f(z))\to\psi_t(f(z_0))=\psi(v_0)\quad\text{as}\ z\to z_0,
\end{equation} 
uniformly in $t\in[0,1]$.
It follows that
\begin{equation} \label{1c2}
g(\phi_t(z))=\psi_t(f(z))\in\psi(f(D))=g(\phi(D))
\end{equation} 
if $z$ is sufficiently close to~$z_0$.
Thus $\phi_t(z)$ can be obtained by analytic continuation of $g^{-1}$ along the 
curve $t\mapsto \psi_t(f(z))$. It follows that $\phi_t(z)\in\phi(D)$ if $z$ is 
sufficiently close to $z_0$.

With $\beta:=\phi_1$ we have $\alpha\circ f=g\circ \beta$.
If $z$ is not a critical point of $f$, then $\beta$ is of the
form $g^{-1}\circ \psi\circ f$ near $z$ for some branch  of the inverse of $g$. 
Hence $\beta$ is holomorphic.
Since $\beta\colon\C\to\C$ is a homeomorphism, this implies that $\beta$ is affine.
\end{proof}

The following result is due to 
Kotus and Urba\'nski~\cite{Kotus2003}.
\begin{lemma}\label{lemma3}
Let $f$ be an elliptic function. Let $q$ be the maximal multiplicity 
of the poles of~$f$. Then
\begin{equation} \label{1d}
\dim J(f)> \frac{2q}{q+1}.
\end{equation} 
\end{lemma}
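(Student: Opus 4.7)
The plan is to construct a hyperbolic Cantor set $K\subset J(f)$ of Hausdorff dimension strictly greater than $2q/(q+1)$ as the limit set of a conformal iterated function system built from inverse branches of $f$ near poles of maximal multiplicity. The double periodicity of $f$ will supply enough branches at each scale to push the dimension past the critical value.

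First, I fix a pole $p_0$ of $f$ of multiplicity $q$ and let $\Lambda$ denote the period lattice of $f$. By periodicity, every lattice translate $p_\lambda:=p_0+\lambda$ (with $\lambda\in\Lambda$) is also a pole of multiplicity $q$, with the same leading Laurent coefficient: $f(z)\sim B(z-p_\lambda)^{-q}$ as $z\to p_\lambda$, for a nonzero constant $B$ independent of $\lambda$. For large $R>0$ and $M>1$, set $W=\{z\in\C:R<|z|<MR\}$. For each $\lambda\in\Lambda$ with $R+R^{-1/q}<|p_\lambda|<MR-R^{-1/q}$, the function $f$ is $q$-to-$1$ from a small disk around $p_\lambda$ onto a neighborhood of $\infty$ containing $W$, giving $q$ local inverse branches $\phi_{\lambda,j}\colon W\to W$. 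To leading order $\phi_{\lambda,j}(w)=p_\lambda+\zeta_j(B/w)^{1/q}$ (where $\zeta_j$ is a $q$-th root of unity), so $|\phi_{\lambda,j}'(w)|=(|B|^{1/q}/q)|w|^{-(q+1)/q}$ and the sup-norm $\|\phi_{\lambda,j}'\|_W$ is uniformly comparable to $R^{-(q+1)/q}$, independently of $\lambda$ and $j$. For $R$ large the images of different branches are pairwise disjoint and contained in $W$, so one obtains a finite conformal IFS with bounded distortion on $W$ (the distortion is controlled by $M$ alone).

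A standard lattice-point count gives the number of admissible $\lambda$ as $c(M^2-1)R^2(1+o(1))$ for a constant $c>0$ depending on $\Lambda$, hence the IFS has $N=qc(M^2-1)R^2(1+o(1))$ branches. By the Bowen formula for finite conformal iterated function systems, the Hausdorff dimension of the limit set $K$ equals the unique zero $t_0$ of the topological pressure $P$; with uniformly comparable derivatives and bounded distortion, $P(t)\approx\log\bigl(N\|\phi'\|^{t}\bigr)$. Evaluating at $t=2q/(q+1)$, the powers of $R$ cancel exactly, leaving $N\cdot\|\phi'\|^t \sim qc(M^2-1)\bigl(|B|^{1/q}/q\bigr)^{2q/(q+1)}$, a positive quantity independent of $R$ that is made strictly greater than $1$ by choosing $M$ sufficiently large. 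Therefore $P(2q/(q+1))>0$, and the monotonicity of $P$ yields $t_0>2q/(q+1)$.

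Finally, $K$ is a hyperbolic forward-invariant set for $f$: the fixed points of finite compositions $\phi_{\lambda_1,j_1}\circ\cdots\circ\phi_{\lambda_n,j_n}$ are repelling periodic points of $f$ and are dense in $K$, so $K\subset J(f)$. Hence $\dim J(f)\geq\dim K=t_0>2q/(q+1)$. The main obstacle is producing the strict inequality: it relies essentially on the uniformity in $\lambda$ of both the sup-norm derivative and the leading Laurent coefficient $B$, a consequence of elliptic periodicity, which permits $M$ to be freely enlarged without losing uniform control of the individual branch contractions and so drives the pressure strictly above zero at the critical exponent $2q/(q+1)$.
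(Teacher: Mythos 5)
The overall strategy—building a conformal IFS from inverse branches near poles of maximal multiplicity, and exploiting the period lattice to push the dimension past $2q/(q+1)$—is the same as the paper's. But there is a genuine gap in the step where you claim $P\bigl(2q/(q+1)\bigr)>0$ by taking $M$ large, and the gap comes from the one-step construction.

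Your branches $\phi_{\lambda,j}$ all have $|\phi_{\lambda,j}'(w)|\asymp |w|^{-(q+1)/q}$ with $w$ ranging over the \emph{whole} annulus $W=\{R<|w|<MR\}$, so each branch has distortion $\sup_W|\phi'|/\inf_W|\phi'|\asymp M^{(q+1)/q}$; this is not ``bounded by $M$ alone'' in any way that helps, because you then let $M\to\infty$. A lower bound on $\dim K$ (via Lemma~\ref{lemma4a} of the paper, or via the lower bound $P(t)\ge \log\sum_i(\inf_W|\phi_i'|)^t$ implicit in Bowen's formula) must use the \emph{infimum} of the derivative, which is $\asymp (MR)^{-(q+1)/q}$. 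Plugging this and $N\asymp qc(M^2-1)R^2$ into $\sum_i(\inf|\phi_i'|)^t$ at $t=2q/(q+1)$ gives $\asymp qc\,(M^2-1)/M^2$, which stays \emph{bounded} as $M\to\infty$ and need not exceed $1$. Your displayed quantity $N\|\phi'\|^t\sim qc(M^2-1)(\cdot)$ tacitly uses the supremum $\|\phi'\|\asymp R^{-(q+1)/q}$; that controls $P(t)$ only from \emph{above} and hence yields an upper bound on the dimension, not the lower bound you need. The correct pressure, evaluating derivatives along orbits (i.e.\ at $w\approx p_\lambda$), is governed by $q\sum_\lambda |p_\lambda|^{-(q+1)t/q}$, which at $t=2q/(q+1)$ becomes $q\sum_\lambda|p_\lambda|^{-2}\asymp \log M$ — it does diverge, but by a logarithm, not by $M^2$, and making this rigorous requires a distortion argument your setup does not supply.

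The paper sidesteps exactly this obstruction by composing twice: it fixes one pole $a_M$, takes a small disk $D(a_M,r_0)$, and builds an IFS of branches of $(f^2)^{-1}$ mapping $\overline{D(a_M,r_0)}$ into tiny sub-disks $\overline{W_k}\subset D(a_M,r_0)$, one for each pole $a_k$. Because the \emph{domain} of every branch is the same small disk $D(a_M,r_0)$, the derivative of each branch is nearly constant on it, and Falconer's Lemma~\ref{lemma4a} applies with $b_k\asymp|a_k|^{-(q+1)/q}$; the conclusion then follows cleanly from the divergence of $\sum_k|a_k|^{-2}$, which is your lattice observation. To repair your argument you would either have to pass to the two-step IFS as in the paper, or replace your crude estimate $P(t)\approx\log\bigl(N\|\phi'\|^t\bigr)$ by a proper bounded-distortion (Koebe) argument showing $P(t)\asymp\log\bigl(q\sum_\lambda|p_\lambda|^{-(q+1)t/q}\bigr)$; the lattice-count statement $\sum_\lambda|p_\lambda|^{-2}=\infty$ would then finish it, but ``$(M^2-1)$ grows without bound'' by itself does not.
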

We shall need a variation of this result.
\begin{lemma}\label{lemma3a}
Let $f$ be as in Lemma~\ref{lemma3}.
If $(f_n)$ is a sequence of meromorphic functions which converges locally
uniformly to~$f$, then
\begin{equation} \label{1e}
\dim J(f_n)> \frac{2q}{q+1}
\end{equation} 
for all large~$n$.
\end{lemma}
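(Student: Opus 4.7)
The strategy is to verify that the Kotus--Urba\'nski proof of Lemma~\ref{lemma3} yields a hyperbolic Cantor set $K\subset J(f)$ with $\dim K>2q/(q+1)$ which is produced by finitely many inverse branches of $f$ on a bounded region, and then to transfer this construction to~$f_n$ by a perturbation argument.

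First, I would isolate the local iterated function system inside their argument. Fix a pole $a$ of $f$ of multiplicity~$q$, so that $f(z)=c(z-a)^{-q}(1+o(1))$ as $z\to a$. Choose $R>0$ with $\sing(f^{-1})\subset D(0,R)$. The branches of $f^{-1}$ mapping $\CC\setminus \overline{D(0,R)}$ into a small neighbourhood of~$a$, composed with a fixed iterate of $f$ sending this neighbourhood back into $\CC\setminus\overline{D(0,2R)}$, define a conformal IFS $\mathcal F$ of bounded geometry whose limit set $K$ lies in $J(f)$. A standard Bowen/pressure computation, using the pole asymptotics at~$a$, gives $\dim K>2q/(q+1)$ with \emph{strict} inequality. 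The key feature is that $\mathcal F$ is generated by finitely many branches defined on compact subsets of $\CC$ disjoint from~$\sing(f^{-1})$.

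Second, I would transport this IFS to~$f_n$. Since $f_n\to f$ locally uniformly on $\CC$ in the spherical sense, Hurwitz's theorem yields a pole $a_n\to a$ of $f_n$ of the same multiplicity~$q$, and $f_n(z)=c_n(z-a_n)^{-q}(1+o(1))$ uniformly in a fixed neighbourhood of~$a$ with $c_n\to c$. For large~$n$ the singular values of $f_n$ relevant to the construction remain in $D(0,R)$, and the inverse branches of $f_n$ on the compact regions involved converge in the $C^1$ topology to the corresponding branches of~$f$. Consequently an analogous IFS $\mathcal F_n$ for $f_n$ is well-defined for large~$n$, with contraction ratios and distortion constants uniformly close to those of~$\mathcal F$. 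By continuity of the Bowen dimension of a hyperbolic conformal IFS under $C^1$-small perturbations of its generators, the Hausdorff dimension $\dim K_n$ of the attractor of $\mathcal F_n$ tends to $\dim K$ as $n\to\infty$. Since $\dim K>2q/(q+1)$ strictly, we get $\dim J(f_n)\geq \dim K_n>2q/(q+1)$ for all sufficiently large~$n$.

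The main obstacle is the first step. The Kotus--Urba\'nski proof is phrased globally, using the full elliptic structure of~$f$ (periodicity of the pole lattice, a specific Perron--Frobenius type analysis of a transfer operator on a fundamental domain, etc.), whereas the present lemma dispenses with ellipticity and only preserves the multiplicity~$q$ near one pole. One therefore has to re-extract from their proof a purely local lower bound whose construction depends continuously on the relevant local data of~$f$. Once this is done, the persistence of hyperbolic conformal IFSs and continuity of their Bowen dimension make the remaining steps routine.
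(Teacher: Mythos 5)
Your high-level strategy---isolate a finite conformal iterated function system near poles, get a strict lower bound on its Hausdorff dimension, then transfer it to $f_n$ by a perturbation/convergence argument---is indeed the strategy the paper uses. The transfer step in your proposal (Hurwitz to track poles and their multiplicities, $C^1$-convergence of the relevant inverse branches, continuity of the dimension under small perturbations of a finite hyperbolic IFS) is essentially correct and matches the paper's treatment. But there is a genuine gap in the construction step, and you flag it yourself without resolving it.

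The difficulty is not merely that Kotus--Urba\'nski's proof ``uses the full elliptic structure'' and must be re-extracted; it is that your proposed IFS is built at a \emph{single} pole $a$, using its $q$ inverse branches composed with a fixed iterate of $f$, and such a system cannot produce the strict inequality $\dim K>2q/(q+1)$. The pole asymptotics at one pole alone give contraction rates of the order $|a|^{-(q+1)/q}$ but no leverage to push the Bowen/Moran exponent above $2q/(q+1)$. The strictness in the paper comes from a genuinely non-local input: the divergence $\sum_k |a_k|^{-2}=\infty$ over the sequence of poles. Concretely, the paper fixes a disk $D(a_M,r_0)$ around one pole and, for each $k$ with $M\le k\le N$, defines $S_k$ as the inverse of $f^2\colon W_k\to D(a_M,r_0)$, where the orbit first visits a neighborhood of $a_k$ and then returns; the resulting contraction constants are $b_k\asymp |a_k|^{-(q+1)/q}$, and because $\sum_k |a_k|^{-2}$ diverges, the exponent $t$ solving $\sum_{k=M}^N b_k^{t}=1$ exceeds $2q/(q+1)$ once $N$ is large. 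The paper then invokes Falconer's elementary lower bound for finite IFS (\cite[Proposition~9.7]{Falconer1990}) rather than a pressure/Bowen-formula argument, which makes the whole construction short and self-contained and avoids any need to re-read Kotus--Urba\'nski. For the perturbed maps, only the finitely many poles $a_M,\dots,a_N$ (and hence a compact region) are involved, so local uniform convergence suffices to get nearby domains $W_k^n$ with $f_n^2\colon W_k^n\to U_M^n$ biholomorphic and $b_k^n\to b_k$, and Falconer's lemma applies again. You would need to replace the single-pole IFS in your sketch by this multi-pole system; otherwise the strict bound, which is the whole point, is not reached.
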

Lemma~\ref{lemma3a} can be deduced from the work of
Kotus and Urba\'nski; see Remark~\ref{remark1} below.
But for the convenience of the reader we will include a proof of Lemma~\ref{lemma3a},
thereby reproving Lemma~\ref{lemma3}.
Here we will use the following result~\cite[Proposition~9.7]{Falconer1990}.
\begin{lemma} \label{lemma4a}
Let $S_1,\dots,S_m$ be contractions on a closed subset $K$ of $\R^d$ such that
there exists $b_1,\dots,b_m\in (0,1)$ with
\begin{equation}\label{4b}
b_k|u-v|\leq |S_k(u)-S_k(v)|
\quad \text{for } u,v\in K 
\end{equation}
and $1\leq k\leq m$.
Suppose that $K_0$ is a non-empty compact subset of $K$ with
\begin{equation}\label{4c}
K_0=\bigcup_{k=1}^m S_k(K_0)
\end{equation}
and $S_j(K_0)\cap S_k(K_0)=\emptyset$ for $j\neq k$.
Let $t>0$ with
\begin{equation}\label{4d}
\sum_{k=1}^m b_k^t=1.
\end{equation}
Then $\dim K_0\geq t$.
\end{lemma}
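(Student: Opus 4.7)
The plan is to construct a natural Bernoulli-type probability measure on $K_0$ and apply the mass distribution principle (Frostman's lemma), showing that the measure of small balls decays like $r^t$.

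First, I would define a Borel probability measure $\mu$ supported on $K_0$ as follows. For each finite word $\mathbf{i}=(i_1,\dots,i_n)\in\{1,\dots,m\}^n$, write $S_{\mathbf{i}}:=S_{i_1}\circ\cdots\circ S_{i_n}$ and $K_{\mathbf{i}}:=S_{\mathbf{i}}(K_0)$, and assign the weight $\mu(K_{\mathbf{i}})=(b_{i_1}\cdots b_{i_n})^t$. Iterating the hypothesis $K_0=\bigcup_k S_k(K_0)$ shows $K_{\mathbf{i}}=\bigcup_k K_{\mathbf{i}k}$; iterating $S_j(K_0)\cap S_k(K_0)=\emptyset$ shows this union is disjoint. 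The normalization $\sum_k b_k^t=1$ then gives $\mu(K_{\mathbf{i}})=\sum_k\mu(K_{\mathbf{i}k})$, so $\mu$ is consistently defined on the semialgebra generated by the cylinders and extends to a Borel probability measure on $K_0$ by Carath\'eodory's theorem.

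The next step is to track separation between cylinders. Since the $S_k(K_0)$ are disjoint compacta, the quantity $\delta_0:=\min_{j\neq k}\operatorname{dist}(S_j(K_0),S_k(K_0))$ is strictly positive. The lower Lipschitz estimate~\eqref{4b} then propagates: if $\mathbf{i},\mathbf{j}$ share a common prefix of length $l$ and diverge at position $l+1$, applying $S_{i_1},\dots,S_{i_l}$ successively to points in the disjoint sets $S_{i_{l+1}}(K_0)$ and $S_{j_{l+1}}(K_0)$ yields
\begin{equation}
\operatorname{dist}(K_{\mathbf{i}},K_{\mathbf{j}})\geq b_{i_1}\cdots b_{i_l}\,\delta_0.
\end{equation}

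With these tools in hand, I would verify the mass distribution bound $\mu(B(x,r))\leq \delta_0^{-t}r^t$ for every $x\in K_0$ and every sufficiently small $r>0$. Given $x$, let $(i_1,i_2,\dots)$ be an address of $x$ (so $x\in K_{i_1\cdots i_n}$ for every $n$) and let $n$ be the largest integer with $b_{i_1}\cdots b_{i_n}\,\delta_0>r$. If a level-$(n+1)$ cylinder $K_{\mathbf{k}}$ shares a prefix of length $l\leq n$ with $(i_1,\dots,i_{n+1})$, then by the displayed separation estimate $\operatorname{dist}(K_{\mathbf{k}},x)\geq b_{i_1}\cdots b_{i_l}\,\delta_0\geq b_{i_1}\cdots b_{i_n}\,\delta_0>r$, so $K_{\mathbf{k}}$ is disjoint from $B(x,r)$. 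Hence $B(x,r)\cap K_0$ lies entirely in the single cylinder $K_{i_1\cdots i_{n+1}}$, which gives
\begin{equation}
\mu(B(x,r))\leq (b_{i_1}\cdots b_{i_{n+1}})^t\leq \delta_0^{-t}r^t
\end{equation}
by the maximality of $n$. The mass distribution principle then yields $\dim K_0\geq t$.

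The only delicate point I anticipate is the well-definedness of $\mu$ on all Borel subsets of $K_0$; once one has consistency and disjointness at every level this is a routine application of Carath\'eodory's extension. The geometric heart of the argument is the separation propagation in the second paragraph, but given the lower Lipschitz hypothesis~\eqref{4b} it is immediate, and it is this that drives the whole proof.
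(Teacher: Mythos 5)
Your argument is correct: the Bernoulli measure on the cylinder sets, the propagated separation estimate $\operatorname{dist}(K_{\mathbf i},K_{\mathbf j})\geq b_{i_1}\cdots b_{i_l}\delta_0$ coming from~\eqref{4b}, and the mass distribution principle together give $\mu(B(x,r))\leq \delta_0^{-t}r^t$ and hence $\dim K_0\geq t$. The paper does not prove this lemma but simply cites Falconer's Proposition~9.7, and your proof is essentially the standard argument given there, so there is nothing to add beyond the routine measure-extension details you already flag.
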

\begin{proof}[Proof of Lemmas~\ref{lemma3} and~\ref{lemma3a}]
Let $(a_j)$ be the sequence of poles of multiplicity~$q$. 
For sufficiently large $R$ there is a neighborhood $U_j$ of $a_j$ such that
\begin{equation}\label{2a}
f\colon U_j\setminus\{a_j\}\to\{z\in\C\colon |z|>R\}
\end{equation}
is a covering map of degree~$q$. 
There exists $r_1>0$ such that $D(a_j,r_1)\subset U_j$ for all~$j$. Let $0<r_0<r_1$.
Then there exists $M\in\N$ such that 
\begin{equation}\label{2a1}
\overline{D(a_k,r_0)}\subset \overline{U_k}\subset f(D(a_j,r_0))
\end{equation}
  for all $j,k\geq M$.
Thus for $j,k\geq M$ there exists $V_{j,k}\subset D(a_j,r_0)$ 
such that $f\colon V_{j,k}\to D(a_k,r_0)$ is biholomorphic.

Let $W_k:=f^{-1}(V_{k,M})\cap D(a_M,r_0)$. 
Then 
\begin{equation}\label{2b}
f^2\colon W_{k}\to D(a_M,r_0)
\end{equation}
is biholomorphic. Moreover, $f^2$ extends to a bijective map from 
$\overline{W_k}$ to $K:=\overline{D(a_M,r_0)}$.
Let $S_k\colon K\to  \overline{W_{k}}$ be the inverse function
of $f^2\colon \overline{W_{k}}\to K$.
Then $S_k$ extends to an injective map $S_k\colon D(a_M,r_1)\to\C$.
Choosing $r_0\leq (2-\sqrt{3})r_1$ we conclude that $W_k=S_k(D(a_M,r_0))$ 
is convex; see~\cite[Theorem~2.13]{Duren1993}.

In order to apply Lemma~\ref{lemma4a} we note that
since $f$ has a pole of multiplicity $q$ at~$a_j$, 
there exists $c_1>0$ such that 
\begin{equation}\label{2c}
|f'(z)|\leq c_1 |f(z)|^{(q+1)/q}
\quad\text{for}\ z\in D(a_j,r_0).
\end{equation}
This implies that there exists $c_2>0$ such that
\begin{equation}\label{2d}
|f'(z)|\leq c_2 |a_k|^{(q+1)/q} 
\quad\text{if}\ z\in D(a_j,r_0) \ \text{and}\ f(z)\in D(a_k,r_0).
\end{equation}
With $c_3:= c_2^2 |a_M|^{(q+1)/q}$ this yields that
\begin{equation}\label{2e}
|(f^2)'(z)|=|f'(f(z))f'(z)|\leq c_3 |a_k|^{(q+1)/q} 
\quad\text{for}\ z\in W_k.
\end{equation}
Since $W_k$ is convex this yields that
\begin{equation}\label{2e1}
|u-v|=|f^2(S_k(u))-f^2(S_k(v))|\leq  c_3 |a_k|^{(q+1)/q}|S_k(u)-S_k(v)|
\quad\text{for}\ u,v\in K.
\end{equation}
It follows that~\eqref{4b} holds with
\begin{equation}\label{2f}
b_k:=\frac{1}{c_3 |a_k|^{(q+1)/q}} .
\end{equation}
Let now $N\geq M$ and define $t>0$ by 
\begin{equation}\label{2g}
\sum_{k=M}^N b_k^t=1.
\end{equation}
It follows from Lemma~\ref{lemma4a} that the limit set of the iterated function system
$\{S_k\colon M\leq k\leq N\}$ has Hausdorff dimension at least~$t$. 
It is easily seen that this limit set is contained in the Julia set.
Thus $\dim J(f)\geq t$.
Since 
\begin{equation}\label{2h}
\sum_{k=M}^\infty \frac{1}{|a_k|^2}=\infty 
\end{equation}
we have $t>2q/(q+1)$ if $N$ is large enough.
This yields Lemma~\ref{lemma3}.

To prove Lemma~\ref{lemma3a} we note that for large $n$ there are
domains $W_k^n$ and $U_M^n$ close to $W_k$ and $U_M$ such that 
\begin{equation}\label{2i}
f_n^2\colon W_k^n\to U_M^n
\end{equation}
is biholomorphic, and the inverse function $S_k^n$ satisfies~\eqref{4b} with
a constant $b_k^n$ instead of $b_k$, with $b_k^n\to b_k$ as $n\to\infty$.
The conclusion then follows again from Lemma~\ref{lemma4a}.
\end{proof}
\begin{remark}\label{remark1}
The argument of Kotus and Urba\'nski~\cite{Kotus2003} is similar to the one used above,
but they use an infinite iterated function system and apply results of 
Mauldin and Urba\'nski~\cite{Mauldin1996} concerning such systems.
However, it would suffice to consider a sufficiently large finite subsystem.
This would yield Lemma~\ref{lemma3a} as above.

The proof actually yields that the hyperbolic dimension of $f$ and $f_n$, and 
not only the Hausdorff dimension of their Julia sets, have the given lower bound;
see~\cite{Baranski2009,Rempe2009} for a discussion of the hyperbolic dimension
of meromorphic functions.
\end{remark}

\begin{lemma}\label{lemma6}
Let $f\in \S$. Suppose that $f$ has an attracting fixed point whose
immediate attracting basin contains all finite singularities of~$f^{-1}$.
Suppose also that $\infty$ is not an asymptotic value of~$f$ and that there exists
a uniform bound on the  multiplicities of the poles of~$f$.
Then $J(f)$ is totally disconnected and
$F(f)$ is connected.
\end{lemma}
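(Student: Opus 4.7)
The plan is to prove that $J(f)$ is totally disconnected; the connectedness of $F(f)$ will then follow by Alexander duality, since a compact totally disconnected subset of $\widehat{\C}$ has connected complement.

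The key dynamical observation is that the post-singular set $P:=\bigcup_{n\ge 0} f^n(\sing(f^{-1}))$ is contained in $A$. Indeed, $A$ contains all finite singularities of $f^{-1}$ by hypothesis and is forward invariant, so $P\subset A$; forward orbits of singular values converge to the attracting fixed point $z_0\in A$, so $\overline{P}$ is a compact subset of $A$ disjoint from $J(f)$. Consequently $\Omega:=\widehat{\C}\setminus\overline{P}$ is a hyperbolic Riemann surface containing $J(f)$, and $f\colon f^{-1}(\Omega)\to\Omega$ is a covering map, branched only over $\infty$ at poles of multiplicity at least two.

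To prove total disconnectedness, I would apply the hyperbolic contraction provided by the strict Schwarz--Pick inequality. Since $P$ is forward invariant and $f$ has infinite degree, $f^{-1}(\overline{P})\supsetneq\overline{P}$, and hence $f^{-1}(\Omega)\subsetneq\Omega$ strictly. By Schwarz--Pick, the inverse branches of $f$ are strict contractions of the hyperbolic metric of $\Omega$ uniformly on compact subsets of $\Omega$. A standard argument, analogous to the rational hyperbolic case, then yields that every point of $J(f)$ admits arbitrarily small open neighbourhoods whose boundaries lie in $F(f)$ (obtained as pullbacks under $f^n$ of a suitable Jordan curve in $A$ surrounding $\overline{P}$). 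Hence the connected component of each point of $J(f)$ is a singleton, and $J(f)$ is totally disconnected.

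The principal obstacle is controlling the branching of $f$ over $\infty$ at poles of multiplicity greater than one, where the naive hyperbolic derivative degenerates. The uniform bound $M$ on pole multiplicities is essential here: it allows Koebe-type distortion estimates on inverse branches in a chart adapted to $\infty$, so that hyperbolic contraction translates into uniform spherical shrinking of the pullback components, even when these cross poles of the function. This is the technical heart of the argument.
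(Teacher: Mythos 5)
Your setup is sound and in fact close to the paper's: the postsingular set is compactly contained in the immediate basin, so $J(f)$ lies in the hyperbolic domain $\Omega=\CC\setminus\overline{P}$, and one wants to trap each point of $J(f)$ inside a nest of shrinking pullbacks of a curve surrounding $\overline{P}$; the deduction that $F(f)$ is connected once $J(f)$ is shown to be compact and totally disconnected is also fine. The gap is that the shrinking of the pullbacks is asserted rather than proved. ``Strict contraction of the hyperbolic metric of $\Omega$, uniformly on compact subsets of $\Omega$'' does not suffice: $\infty\in J(f)\subset\Omega$ while $\infty\notin f^{-1}(\Omega)$, the poles (which lie in $J(f)$) accumulate at $\infty$, and orbits in $J(f)$ may tend to $\infty$; so the points whose pullbacks must be controlled do not stay in a fixed compact subset of $\Omega\setminus\{\infty\}$, and the contraction factor of the inclusion $f^{-1}(\Omega)\hookrightarrow\Omega$ degenerates exactly there. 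That this is not a removable technicality is shown by $\lambda e^{z}$ with $0<\lambda<1/e$: it satisfies every hypothesis of the lemma except that $\infty$ is an asymptotic value, the same Schwarz--Pick contraction is available, and yet its Julia set is a Cantor bouquet containing curves, hence not totally disconnected. So a correct proof must make quantitative use of the two hypotheses that you defer to unspecified ``Koebe-type distortion estimates''; those estimates are the content of the lemma, not a routine appendix.

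For comparison, the paper's device at this step is elementary and avoids hyperbolic metrics entirely. Choose Jordan domains $U\subset\overline{U}\subset V$ in the basin with $P(f)\subset U$ and set $A=V\setminus\overline{U}$. Since $\infty$ is not an asymptotic value, every component of $f^{-1}(\CC\setminus\overline{U})$ met by $J(f)$ contains exactly one pole $a_j$ and is a degree $m_j$ branched cover of $\C\setminus\overline{U}$, so the corresponding component of $f^{-1}(A)$ is an annulus of modulus $\modulus(A)/m_j\geq \modulus(A)/M$. Pulling these annuli back univalently along the orbit (possible because $P(f)\subset U$) yields, around each $z\in J(f)$, nested disjoint annuli separating $z$ from boundaries lying in $F(f)$, with moduli bounded below; the Gr\"otzsch inequality (Lemma~\ref{lemma7}) then forces the intersection of the nest to be a single point. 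This is precisely where the uniform bound $M$ and the exclusion of $\infty$ as an asymptotic value enter, and your proposal needs an argument of comparable precision at this point to be complete.
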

There are several closely related results in the literature,
see~\cite[Theorem~G]{Baker2001}, \cite[Theorem~A and Corollary~ 3.2]{Dominguez2014}
and~\cite[Theorem~2.7]{Zheng2015}.
However, none of these results seems to apply exactly to the situation we have.

Zheng~\cite[Theorem~2.7]{Zheng2015} showed that the conclusion 
of Lemma~\ref{lemma6} holds if $\infty\notin\sing(f^{-1})$. 
Thus his result would apply if the poles are assumed to be simple, while we
 allow multiple poles.
Hawkins and Koss (\cite[Theorem~3.12]{Hawkins2005};
see also~\cite[Theorem~3.2]{Hawkins2019})
do not require that the poles are simple,
but they restrict to elliptic functions.

Our proof of Lemma~\ref{lemma6} will use some ideas from the papers mentioned.
A difference to the methods employed there, however, is that 
we will use the following consequence of the Gr\"otzsch
inequality; see\cite[Section~5.2]{Branner1992} and~\cite[Corollary~A.7]{Milnor2000}.
Here and in the following $\modulus(A)$ denotes the modulus of an annulus~$A$.
\begin{lemma}\label{lemma7}
Let $(G_k)$ be a sequence of simply connected domains in $\C$ such that
$A_k:=G_k\setminus \overline{G_{k+1}}$ is an annulus for all $k\in\N$.
Suppose that
\begin{equation} \label{7a1}
\sum_{k=1}^\infty \modulus(A_k) =\infty.
\end{equation}
Then
$\bigcap_{k=1}^\infty G_k$
consists of a single point.
\end{lemma}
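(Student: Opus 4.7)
The plan is to combine Gr\"otzsch's superadditivity of moduli with the classical geometric fact that an annulus of very large modulus whose bounded complementary component lies inside a fixed disk must have that component of very small diameter. As a preliminary, I would note that the hypothesis that each $A_k=G_k\setminus\overline{G_{k+1}}$ is an annulus forces $\overline{G_{k+1}}\subset G_k$ and $G_{k+1}$ to be bounded: if $\overline{G_{k+1}}$ met $\CC\setminus G_k$ (even only at~$\infty$), then these two connected sets would glue into a single connected complement of $A_k$ in $\CC$, making $A_k$ simply connected rather than doubly connected. Hence, after possibly discarding the initial term, one has $G_k$ bounded for every~$k$, and Cantor's theorem applied to the nested compacta $\overline{G_2}\supset\overline{G_3}\supset\cdots$ shows that $\bigcap_k G_k$ is nonempty and compact.

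Next I would apply Gr\"otzsch's inequality: the pairwise disjoint annuli $A_1,\dots,A_n$ each separate the two boundary components of the larger annulus $G_1\setminus\overline{G_{n+1}}$, so
\[
\modulus\!\left(G_1\setminus\overline{G_{n+1}}\right)\;\geq\;\sum_{k=1}^{n}\modulus(A_k),
\]
and by \eqref{7a1} the right-hand side diverges as $n\to\infty$.

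Finally, to turn this divergence into a shrinking of diameters I would invoke the standard Teichm\"uller-type bound: if $A$ is an annulus in $\CC$ whose bounded complementary component $E$ lies inside a disk of radius $R$, then $\diam(E)\leq R\,\phi(\modulus(A))$ for some universal function $\phi$ with $\phi(m)\to 0$ as $m\to\infty$. Applied with $A=G_1\setminus\overline{G_{n+1}}$, $E=\overline{G_{n+1}}$ and $R=\diam(\overline{G_1})$, this yields $\diam(\overline{G_{n+1}})\to 0$, and combined with the nested-compactness from the first step it forces $\bigcap_k G_k$ to be a single point. The main technical obstacle is this modulus-to-diameter estimate; however it is a standard consequence of extremal length theory and is exactly what is recorded in \cite[Section~5.2]{Branner1992} and \cite[Corollary~A.7]{Milnor2000}.
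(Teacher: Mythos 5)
The paper states Lemma~\ref{lemma7} without proof, citing \cite[Section~5.2]{Branner1992} and \cite[Corollary~A.7]{Milnor2000}, and your reconstruction via Gr\"otzsch superadditivity is exactly the standard route those sources take; the preliminary observations (that the annulus hypothesis forces $\overline{G_{k+1}}\subset G_k$ with $G_{k+1}$ bounded, hence nested compacta with nonempty intersection) and the superadditivity estimate $\modulus\!\left(G_1\setminus\overline{G_{n+1}}\right)\geq\sum_{k=1}^{n}\modulus(A_k)$ are both correct.

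The one step you should tighten is the Teichm\"uller-type bound as you have phrased it. The hypothesis that merely the bounded complementary component $E$ of $A$ lie in a disk of radius $R$ is not sufficient: take $E=[-\varepsilon,\varepsilon]\subset D(0,\varepsilon)$ and $A=D(0,R')\setminus E$; then $\modulus(A)\to\infty$ as $R'\to\infty$, while $\diam(E)/R=2$ stays fixed, so no bound of the form $\diam(E)\leq R\,\phi(\modulus(A))$ with $\phi(m)\to 0$ can hold under that hypothesis. What Teichm\"uller's modulus theorem actually gives is a bound under the stronger hypothesis that the \emph{entire annulus} $A$ (equivalently, $E$ together with some point of the unbounded complementary component) lies in a disk of radius $R$. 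In your application this stronger hypothesis does hold, since $A=G_1\setminus\overline{G_{n+1}}\subset G_1$ lies in a disk of radius $\diam(\overline{G_1})$; so the conclusion $\diam(\overline{G_{n+1}})\to 0$ is valid and the overall argument is sound. Just state the auxiliary estimate with the correct hypothesis.
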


\begin{proof}[Proof of Lemma~\ref{lemma6}]
Let $\xi$ be the attracting fixed point whose attracting basin $W$ contains
all finite singularities of~$f^{-1}$.
Then the postsingular set
\begin{equation} \label{7a}
P(f):=\overline{\bigcup_{n=0}^\infty f^n\!\left( \sing(f^{-1})\setminus\{\infty\}\right)}
\end{equation}
is a compact subset of~$W$. It is not difficult to see that there
exist Jordan domains $U$ and $V$ such that
\begin{equation} \label{7b}
\{\xi\}\cup P(f)\subset U\subset \overline{U}\subset V \subset \overline{V} \subset W.
\end{equation}
Then $A:=V\setminus\overline{U}$ is an annulus. Clearly, $A\subset W$.

Let $(a_j)$ be the sequence of poles of $f$ and let $m_j$ denote the multiplicity of~$a_j$.
By hypothesis, there exists $M\in\N$ such that $m_j\leq M$ for all~$j$.
Let $Y_j$ be the component of the preimage of $\CC\setminus \overline{U}$ that contains~$a_j$.
Then $f\colon Y_j\setminus\{a_j\}\to \C\setminus \overline{U}$ is a covering of degree~$m_j$.
Putting $B_j:=f^{-1}(A)\cap Y_j$ we find that $B_j$ is an annulus with
\begin{equation} \label{7c}
\modulus(B_j)=\frac{1}{m_j}\modulus(A)\geq\frac{1}{M}\modulus(A).
\end{equation}
To prove that $J(f)$ is totally disconnected, let $z\in J(f)$.
We want to show that the component of $J(f)$ containing $z$ consists of the point $z$ only.
First we note that
for all $n\in\N$ there exists $j(n)\in\N$ such that $f^n(z)\in Y_{j(n)}$.
Let $X_n$ be the component of $f^{-n}(Y_{j(n)})$ containing~$z$.
Note that $Y_{j(n)}$ and $X_n$ are Jordan domains.
Since $\partial U$ and hence $\partial X_n$ are contained in $F(f)$,
the component of $J(f)$ containing $z$ is contained in the intersection of the~$X_n$.
It thus suffices to prove that this intersection consists of only one point.

In order to do so we note that since
$P(f)\subset U$, the map $f^{n}\colon X_n\to Y_{j(n)}$ is biholomorphic.
This implies that
\begin{equation} \label{7d}
C_n:=f^{-n}(B_{j(n)})\cap X_n= f^{-n-1}(A)\cap X_n
\end{equation}
is an annulus satisfying
\begin{equation} \label{7e}
\modulus(C_n)=\modulus(B_{j(n)})\geq \frac{1}{M}\modulus(A).
\end{equation}
Moreover, $X_n$ is equal to the union of $C_n$ and the component
of $\C\setminus C_n$ that contains~$z$.

Since the closure of $A$ is a compact subset of the attracting basin $W$ of~$\xi$,
there exists $p\in\N$ such that $f^p(A)\subset U$.
In particular,
\begin{equation} \label{7f}
f^p(A)\cap A=\emptyset.
\end{equation}
This implies that $C_{n+p}\cap C_n=\emptyset$ for all~$n\in\N$.
In fact, if $w\in C_{n+p}\cap C_n$, then $f^{n+p+1}(w)\in f^p(A)\cap A$ by~\eqref{7d},
contradicting~\eqref{7f}.
It follows that
\begin{equation} \label{7g}
X_{n+p}\subset X_n\setminus C_n,
\end{equation}
and hence that
\begin{equation} \label{7h}
\modulus\!\left(X_n\setminus\overline{X_{n+p}}\right)
\geq \modulus(C_n)\geq \frac{1}{M}\modulus(A).
\end{equation}
As already mentioned above it now follows from
Lemma~\ref{lemma7}, applied with $G_k=X_{1+pk}$, that
$J(f)$ is totally disconnected. Of course, this yields that $F(f)$ is connected.
\end{proof}

\section{Proof of Theorem~\ref{theorem1}} \label{prooftheorem}
Let $G$ be the conformal map from the triangle with 
vertices $0$, $\pi/2$ and $i\pi/2$ onto the lower half-plane such that
\begin{equation} \label{1g0}
G(\pi/2)=0, \quad G(0)=1
\quad\text{and}\quad
G(i\pi/2)=\infty.
\end{equation} 
Extending this to the whole 
plane by reflections we obtain an elliptic function~$G$. 
The critical values of $G$ are $0$, $1$ and $\infty$ so that $G\in\S_3$.
The zeros and poles of $G$ have multiplicity $4$ while the $1$-points have multiplicity~$2$.
We also note that $G(\R)=[0,1]$.

We may express  $G$ in terms of the Weierstrass $\wp$-function 
with periods $\pi$ and $\pi i$. 
The critical values of $\wp$ are $e_1$, $e_2$, $e_3$ and $\infty$, with 
\begin{equation} \label{1g}
e_2=\wp((\pi+i \pi)/2)=0
\quad\text{and}\quad
e_1=\wp(\pi/2)=-e_3=-\wp(i\pi/2) .
\end{equation} 
It follows from this that
\begin{equation} \label{1h}
G(z)=\left(\frac{\wp(z+i\pi/2)}{e_1}\right)^2 .
\end{equation} 

First we construct an example of a function $f\in\S_3$ where the Julia set 
is the whole sphere
and thus has Hausdorff dimension~$2$.
To this end we consider the function
\begin{equation} \label{1h1}
f(z):=i G(\pi z/2).
\end{equation} 
Then $f\in\S_3$.
The critical values of $f$ are $0$, $i$ and $\infty$ and we have
$f(0)=i$ and $f(i)=\infty$.
To prove that $J(f)=\CC$ we note that $f$ has no wandering domains~\cite{Baker1992}
and no Baker domains~\cite[Corollary to Theorem~A]{Rippon1999}.
All other types of components of $F(f)$ are related to the singularities of~$f^{-1}$;
see~\cite[Theorem~7]{Bergweiler1993} for the exact statement.
Since the points in $\sing(f^{-1})\cap\C=\{0,i\}$ are mapped to $\infty$ by $f$ or $f^2$ 
this yields that $F(f)=\emptyset$ and hence $J(f)=\CC$ as claimed.

To construct functions in $\S_3$
for which the Julia set has Hausdorff dimension $d\in (0,2)$, 
we consider, for $p\in\N$ and small $\eta\in (0,\pi/2)$, the function $H$
defined by $H(z):=\eta G(z)^p$.
The critical values of $H$ are $0$, $\eta$ and~$\infty$. Thus $H\in\S_3$.
We have $H(0)=\eta$ and $H(\pi/2)=0$. Also, $H$ decreases in the interval $(0,\pi/2)$.
Choosing $\eta$ sufficiently small we can achieve that $H$ has an attracting
fixed point $\xi\in (0,\pi/2)$ whose attracting basin contains $[0,\eta]$
and thus, since $H(\R)=[0,\eta]$, also contains~$\R$.
Lemma~\ref{lemma6} implies that this attracting basin coincides with $F(H)$ and that
$J(H)$ is totally disconnected.

Since $H((0,\pi/2))=(0,\eta)$ we actually have $\xi\in (0,\eta)$.
Choosing $\eta$ small we can also achieve that $H''(z)\neq 0$ for $0<|z|\leq \eta$.

Let now $m$ be a (large) odd integer.
Then 
\begin{equation} \label{1j}
h_m(z)=H(m\arcsin(z/m)) .
\end{equation} 
defines a meromorphic function $h_m\in \S_3$.
Similar examples were already considered by Teichm\"uller~\cite[p.~734]{Teichmueller1944},
and later by Bank and Kaufman~\cite[Section~5]{Bank1976},
Langley~\cite[Section~2]{Langley2002}
and Eremenko~\cite{Eremenko2004}.

The elliptic function $H(z)=h_m(m\sin(z/m))$ has order~$2$.
A result of Edrei and Fuchs~\cite[Corollary~ 1.2]{Edrei1964} thus
yields that $h_m$ has order~$0$.
In fact, as in the papers cited above we find that there exists a constant $c$ such
that the Nevanlinna characteristic satisfies $T(r,h_m)\sim c( \log r)^2$ as $r\to\infty$.

For large $m$ the function $h_m$ has an attracting fixed point $\xi_m$, with 
$\xi_m\to \xi$ as $m\to\infty$,
such that the attracting basin of $\xi_m$ contains the interval
$[0,\eta]$ and hence~$\R$.
Lemma~\ref{lemma6} implies that this attracting basin is connected and coincides with $F(h_m)$.
Choosing $m$ large we can also achieve that $h_m$ decreases in the interval $[0,\eta]$
and that $h_m''(z)\neq 0$ for $0<|z|\leq \eta$.
This implies that $h_m'$ decreases in the interval $[0,\eta]$.

The poles of $H$ and $h_m$ have multiplicity $4p$.
Except for the zeros at $\pm m$, which have multiplicity $2p$, 
the zeros of $h_m$ also have multiplicity $4p$.
The $\eta$-points on the real axis have multiplicity~$2$, but if $p\geq 2$, then 
$H$ and $h_m$ also have simple $\eta$-points (corresponding to the points where 
$G$ takes the $p$-th roots of unity).

As the poles of $H$ have multiplicity $4p$, Lemma~\ref{lemma3} yields that 
\begin{equation} \label{1i}
\dim J(H) > \frac{8p}{4p+1} .
\end{equation} 
Moreover, this lemma says that if $m$ is sufficiently large, then
\begin{equation} \label{1k}
\dim J(h_m)> \frac{8p}{4p+1}.
\end{equation} 
We fix such a value of $m$ and, for $\lambda\in (0,1]$, we
put $f_\lambda(z):=h_m(\lambda z)$ so that $f_1=h_m$.

Since $h_m$ has order~$0$, 
Lemma~\ref{lemma1} yields that 
\begin{equation} \label{1l}
\lim_{\lambda\to 0}\dim J(f_\lambda) = 0.
\end{equation} 

We will show that the function $\lambda\mapsto \dim J(f_\lambda)$ is continuous
in the interval $(0,1]$. Since $f_1=h_m$ it then follows from~\eqref{1k} and~\eqref{1l}
that for all $d\in (0,8p/(4p+1)]$ there exists
$\lambda\in (0,1]$ such that $\dim J(f_\lambda)=d$. Since $p$ can be chosen arbitrarily
large, this yields the conclusion. 

It remains to prove that $\lambda\mapsto \dim J(f_\lambda)$ is continuous.
Recalling that $h_m$ and $h_m'$ are decreasing in the interval $[0,\eta]$
we can deduce that $f_\lambda$ has an attracting fixed point 
$\zeta_\lambda\in(0,\eta)$ and that the multiplier
\begin{equation} \label{1l1}
m_\lambda:=f_\lambda'(\zeta_\lambda)=\lambda h_m'(\lambda \zeta_\lambda)
\end{equation} 
is a decreasing function of $\lambda$ in the interval $(0,1]$.
Note here that $\zeta_1=\xi_m$ and $m_1=h_m'(\xi_m)<0$.
As before it follows from Lemma~\ref{lemma6} that 
the attracting basin of $\zeta_\lambda$ is connected and coincides with the Fatou set
of $f_\lambda$.

Let $\lambda\in (0,1]$.
K\oe nigs' theorem~\cite[Theorem~8.2]{Milnor1999} yields that 
there exists a function $g$ holomorphic and injective 
in some neighborhood $U$ of $\zeta_\lambda$ 
such that $g(\zeta_\lambda)=0$, $g'(\zeta_\lambda)=1$ and 
\begin{equation} \label{1n}
g(f_\lambda(g^{-1}(z)))=m_\lambda z 
\end{equation} 
for all $z\in g(U)$.
For $\kappa\in(0,1]$ we put
\begin{equation} \label{1o}
\gamma=\frac{\log m_\kappa}{\log m_\lambda} -1
\end{equation} 
and define $h\colon\C\to\C$,
$h(z)= z|z|^\gamma$. Then 
\begin{equation} \label{1p}
h(m_\lambda h^{-1}(z))=m_\kappa z. 
\end{equation} 
With $\phi=h\circ g\colon U\to\C$ we then have
\begin{equation} \label{1q}
\phi(f_\lambda(\phi^{-1}(z)))=m_\kappa z
\end{equation} 
for $z\in \phi(U)$. The maps $h$ and $\phi$ are $K$-quasi\-con\-for\-mal with 
\begin{equation} \label{1r}
K= \max\left\{ \frac{\log m_\kappa}{\log m_\lambda}, \frac{\log m_\lambda}{\log m_\kappa}\right\}. 
\end{equation} 
For a detailed account of quasiconformal mappings, we refer to \cite{Lehto1973}. The complex dilatation $\mu(z):=\phi_{\overline{z}}(z)/\phi_z(z)$ satisfies 
\begin{equation} \label{1s}
\mu(f_\lambda(z))= \mu(z) \frac{f_\lambda'(z)}{\;\overline{f_\lambda'(z)}\;}
\end{equation} 
if $z,f_\lambda(z)\in U$.
We may use~\eqref{1s} to extend $\mu$ to~$\C$. More precisely, we put
$\mu(z)=0$ for $z\in J(f_\lambda)$ while for $z\in F(f_\lambda)$ we define
\begin{equation} \label{1t}
 \mu(z) = \mu(f_\lambda^n(z)) \frac{\;\overline{(f_\lambda^n)'(z)}\;}{(f_\lambda^n)'(z)}, 
\end{equation} 
where $n$ is chosen so large that $f_\lambda^n(z)\in U$.
Using~\eqref{1s} it is easily seen that $\mu$ is well-defined, i.e., the definition does not
depend on the value of~$n$ chosen in~\eqref{1t}. We find that~\eqref{1s} holds for all~$z$.

Let $\psi\colon\C\to\C$ be the solution of the Beltrami equation 
\begin{equation} \label{1t1}
\mu(z)=\frac{\psi_{\overline{z}}(z)}{\psi_z(z)}, 
\end{equation} 
normalized by $\psi(0)=0$ and $\psi(\eta)=\eta$.
It follows from~\eqref{1s} that 
\begin{equation} \label{1t2}
k:=\psi\circ f_\lambda\circ \psi^{-1}
\end{equation} 
is meromorphic.
Since $f_\lambda$ is symmetric with respect to the real axis, the same applies 
to $g$, $\phi$, $\mu$, $\psi$ and~$k$.
By definition, $f_\lambda$ is even.
In order to show that $k$ is also even, 
we first note that since $f_\lambda$ is even, it follows from~\eqref{1s} that $\mu$ is even.
This implies that $\psi(z)$ and $\psi(-z)$ have the same complex dilatation.
Hence there exists an affine map $L$ such that $\psi(z)=L(\psi(-z))$.
Since $\psi(0)=0$ we have $L(0)=0$ so that $L$ has the form $L(z)=az$
for some $a\in\C\setminus\{0\}$. We also see that $L$ is real on the 
real axis so that $a\in\R\setminus\{0\}$.
Since $a\psi(i)=L(\psi(i))=\psi(-i)=\overline{\psi(i)}$ we find that $|a|=1$.
As $\psi$ is injective this implies that $a=-1$ so that $\psi$ is odd. Hence $k$ is even.

Since the complex dilatations of $\phi$ and $\psi$ agree in~$U$, we have 
$\phi=\tau\circ\psi$ for some function $\tau$ holomorphic and injective on $\psi(U)$.
Together with~\eqref{1q} this implies that 
\begin{equation} \label{1u}
k(z)= \psi(f_\lambda(\psi^{-1}(z)))=\tau^{-1}(m_\kappa\tau(z)) \sim m_\kappa z
\end{equation} 
as $z\to \tau^{-1}(0)=\psi(\zeta_\lambda)$. Thus $k'(\psi(\zeta_\lambda))=m_\kappa$. 
Moreover, $k(\psi(\zeta_\lambda))=\psi(\zeta_\lambda)$.
In other words, $\psi(\zeta_\lambda)$ is a fixed point of $k$ of multiplier~$m_\kappa$.

Another function with a fixed point of multiplier~$m_\kappa$ is~$f_\kappa$.
We will show that $k=f_\kappa$.
In order to do so we note that both $k$ and $f_\kappa$ are in $\S_3$,
with critical values $0$, $\eta$ and~$\infty$.
It follows from Lemma~\ref{lemma2} and~\eqref{1t2} that there exist 
a fractional linear transformation $\alpha$ and an affine map $\beta$ such that
$\alpha\circ k=f_\lambda\circ \beta$.
Since all poles of $k$ and $f_\kappa$ have multiplicity~$4p$,
all but two zeros of both functions have multiplicity~$4p$,
and all $\eta$-points of both functions have multiplicity $2$ or~$1$,
we find that $\alpha(0)=0$, $\alpha(\eta)=\eta$ and $\alpha(\infty)=\infty$.
Hence $\alpha(z)\equiv z$ so that $k=f_\lambda\circ \beta$.

As $\beta$ is affine we have $-\beta(-z)=\beta(z)-2\beta(0)$.
Noting that $k$ and $f_\lambda$ are even we deduce that 
\begin{equation} \label{1v0}
f_\lambda(\beta(z)-2\beta(0))= f_\lambda(-\beta(-z)) 
= f_\lambda(\beta(-z))=k(-z)=k(z)=f_\lambda(\beta(z)).
\end{equation} 
Since periodic functions have order at least $1$ while $f_\lambda$ has order~$0$,
this implies 
that $\beta(0)=0$ so that $\beta$ has the form $\beta(z)=cz$ for some 
constant~$c$. Thus $k(z)=f_\lambda(cz)=h_m(\lambda cz)$.
As $k$ has an attracting fixed point of multiplier $m_\kappa$
this yields that $c=\kappa/\lambda$ so that $k(z)=h_m(\kappa z)=f_\kappa(z)$.

Inserting $k=f_\kappa$ in~\eqref{1t2}, we obtain
\begin{equation} \label{1v}
f_\kappa=\psi\circ f_\lambda\circ \psi^{-1}.
\end{equation} 
This implies that 
\begin{equation} \label{1w}
J(f_\kappa)= \psi(J(f_\lambda)).
\end{equation} 
As $\psi$ is $K$-quasi\-con\-for\-mal,
and thus H\"older continuous with exponent $1/K$, we deduce that 
\begin{equation} \label{1y}
\frac{1}{K} \dim J(f_\lambda)\leq \dim J(f_\kappa)\leq K\dim J(f_\lambda).
\end{equation} 
It follows from~\eqref{1r} that $K\to 1$ as $\kappa\to\lambda$.
Thus we deduce from~\eqref{1y} that $\dim  J(f_\kappa)\to \dim J(f_\lambda)$  as $\kappa\to\lambda$.
Hence $\lambda\mapsto \dim J(f_\lambda)$ is continuous. \qed

\begin{remark}\label{remark2}
A celebrated result of Astala~\cite[Corollary~1.3]{Astala1994} says that~\eqref{1y} can
be improved to 
\begin{equation} \label{1x}
\frac{1}{K} \left( \frac{1}{\dim J(f_\lambda)}-\frac12\right)
\leq 
\frac{1}{\dim J(f_\kappa)}-\frac12
\leq 
K \left( \frac{1}{\dim J(f_\lambda)}-\frac12\right).
\end{equation} 
For our purposes, however, the weaker and simpler estimate~\eqref{1y} suffices.
\end{remark}

\noindent
W. Bergweiler: Mathematisches Seminar,
Christian--Albrechts--Universit\"at zu Kiel,
Ludewig--Meyn--Stra{\ss}e 4, 
24098 Kiel, 
Germany\\
Email: bergweiler@math.uni-kiel.de

\medskip

\noindent
W. Cui: Centre for Mathematical Sciences, Lund University, Box 118, 22 100 Lund, Sweden\\
E-mail: weiwei.cui@math.lth.se


\begin{thebibliography}{99}
\bibitem{Albrecht2020}
Simon Albrecht and Christopher J. Bishop,
Speiser class Julia sets with dimension near one. 
J. Anal. Math. 141 (2020), no. 1, 49--98.

\bibitem{AspenbergCui}
Magnus Aspenberg and Weiwei Cui,
Hausdorff dimension of escaping sets of meromorphic functions.
Trans. Amer. Math. Soc., to appear.
\bibitem{Astala1994}
Kari Astala, 
Area distortion of quasiconformal mappings. 
Acta Math. 173 (1994), no. 1, 37--60. 

\bibitem{Baker1975}
I.\ N.\ Baker,
The domains of normality of an entire function.
Ann.\ Acad.\ Sci.\ Fenn.\ Ser.\ A, I.\ Math. 1 (1975), no. 2, 277--283.

\bibitem{Baker2001}
I. N. Baker, P. Dom\'inguez, and M. E. Herring, 
Dynamics of functions meromorphic outside a small set.
Ergodic Theory Dynam. Systems 21 (2001), no. 3, 647--672.

\bibitem{Baker1992} 
I. N. Baker, J. Kotus and L{\" u} Yinian,
Iterates of meromorphic functions IV: Critically finite functions.
Results Math. 22 (1992), no. 3--4, 651--656.

\bibitem{Bank1976}
Steven B. Bank and Robert P. Kaufman,
On meromorphic solutions of first-order differential equations.
Comment. Math. Helv. 51 (1976), no. 3, 289--299. 

\bibitem{Baranski1995}
Krzysztof Bara\'nski, 
Hausdorff dimension and measures on Julia sets of some meromorphic maps. 
Fund. Math. 147 (1995), no. 3, 239--260. 
\bibitem{Baranski2009}
Krzysztof Bara\'nski, Bogus\l awa  Karpi\'nska and Anna Zdunik,
Hyperbolic dimension of Julia sets of meromorphic maps with logarithmic tracts.
Int. Math. Res. Not. IMRN 2009, no. 4, 615--624.
\bibitem{Bergweiler1993}
Walter Bergweiler,
Iteration of meromorphic functions.
Bull.\ Amer.\ Math.\ Soc.\ (N.\ S.)
29 (1993), no. 2, 151--188.
\bibitem{Bergweiler2012}
Walter Bergweiler and Janina Kotus,
On the Hausdorff dimension of the escaping set of certain meromorphic functions.
Trans. Amer. Math. Soc. 364 (2012), no. 10, 5369--5394. 

\bibitem{Bishop2015}
Christopher J. Bishop,
The order conjecture fails in $\S$.
J. Anal. Math. 127 (2015), 283--302. 
\bibitem{Bishop2015a}
Christopher J. Bishop,
Models for the Eremenko-Lyubich class.
J. Lond. Math. Soc. (2) 92 (2015), no. 1, 202--221. 
\bibitem{Bishop2017}
Christopher J. Bishop,
Models for the Speiser class.
Proc. Lond. Math. Soc. (3) 114 (2017), no. 5, 765--797.

\bibitem{Bishop2018}
Christopher J. Bishop,
A transcendental Julia set of dimension $1$. 
Invent. Math. 212 (2018), no. 2, 407--460. 
\bibitem{Branner1992}
Bodil Branner and John H. Hubbard,
The iteration of cubic polynomials. Part II: patterns and parapatterns.
Acta Math. 169 (1992), no. 3--4, 229--325.

\bibitem{Dominguez1998}
P.\ Dom\'inguez,
Dynamics of transcendental meromorphic functions.
Ann.\ Acad.\ Sci.\ Fenn.\ Math.\ 23 (1998), no. 1, 225--250.
\bibitem{Dominguez2014}
P.\ Dom\'inguez, A. Hern\'andez and G. Sienra, 
Totally disconnected Julia set for different classes of meromorphic functions.
Conform. Geom. Dyn. 18 (2014), 1--7.

\bibitem{Duren1993}
Peter L. Duren, 
Univalent Functions.
Springer-Verlag, New York, 1983.

\bibitem{Edrei1964}
Albert Edrei and Wolfgang H. J. Fuchs
On the zeros of $f(g(z))$ where $f$ and $g$ are entire functions.
J. Anal. Math. 12 (1964), 243--255.

\bibitem{Epstein2015}
Adam Epstein and Lasse Rempe-Gillen,
On invariance of order and the area property for finite-type entire functions. 
Ann. Acad. Sci. Fenn. Math. 40 (2015), no. 2, 573--599. 

\bibitem{Eremenko1989}
A.\ E.\ Eremenko, On the iteration of entire functions. 
In ``Dynamical Systems and Ergodic Theory''. Banach Center
Publications 23, Polish Scientific Publishers, Warsaw 1989,
pp. 339--345.

\bibitem{Eremenko2004}
A.\ Eremenko, Transcendental meromorphic functions with three
singular values.  Illinois J. Math. 48 (2004), no. 2, 701--709.
\bibitem{Eremenko1992}
A.\ E.\ Eremenko and M.\ Yu.\ Lyubich, Dynamical properties of some
classes of entire functions. {\rm Ann.\ Inst.\ Fourier} 42 (1992), no. 4,
989--1020.

\bibitem{Falconer1990}
Kenneth Falconer, Fractal Geometry. Mathematical Foundations and Applications.
 John Wiley \& Sons, Chichester, 1990.

\bibitem{Hawkins2019}
Jane Hawkins,
Stability of Cantor Julia sets in the space of iterated elliptic functions.
In ``Dynamical Systems and Random Processes''.
Contemp. Math., 736, Amer. Math. Soc., Providence, RI, 2019, pp. 69--95,

\bibitem{Hawkins2005}
Jane Hawkins and Lorelei Koss,
Connectivity properties of Julia sets of Weierstrass elliptic functions.
Topology Appl. 152 (2005), no. 1--2, 107--137.

\bibitem{Hayman1964}
W.\ K.\ Hayman,
{\rm Meromorphic Functions}.
Clarendon Press, Oxford, 1964.

\bibitem{Kotus2003}
Janina Kotus and Mariusz Urba\'nski,
Hausdorff dimension and Hausdorff measures of Julia sets of elliptic
functions.
Bull.\ London Math.\ Soc.\ 35 (2003), no. 2, 269--275.
\bibitem{Kotus2008}
Janina Kotus and Mariusz Urba\'nski,
Fractal measures and ergodic theory of transcendental meromorphic functions.
In ``Transcendental Dynamics and Complex Analysis''.
London Math.\ Soc.\ Lect.\ Note Ser.\ 348.
Edited by P.\ J.\ Rippon
and G.\ M.\ Stallard,
Cambridge Univ.\ Press, Cambridge, 2008, pp.~251--316.

\bibitem{Langley2002}
James K.\ Langley,
Critical values of slowly growing meromorphic functions.
Comput. Methods Funct. Theory 2 (2002), no. 2, 537--547.

\bibitem{Lehto1973}
Olli Lehto and K. I. Virtanen,
Quasiconformal Mappings in the Plane.
Die Grundlehren der mathematischen Wissenschaften 126.
Springer-Verlag, New York, Heidelberg, 1973.

\bibitem{Mauldin1996}
R. Daniel Mauldin and Mariusz Urba\'nski, 
Dimensions and measures in infinite iterated function systems.
Proc. London Math. Soc. (3) 73 (1996), no. 1, 105--154.

\bibitem{Mayer2009}
Volker Mayer, 
The size of the Julia set of meromorphic functions. 
Math. Nachr. 282 (2009), no. 8, 1189--1194. 

\bibitem{Mayer2021}
Volker Mayer and Mariusz Urba\'nski,
The exact value of Hausdorff dimension of escaping sets of class B meromorphic functions.
Preprint, arXiv: 2103.02254.

\bibitem{Milnor2000}
John Milnor,
Local connectivity of Julia sets: expository lectures.
In ``The Mandelbrot Set, Theme and Variations''.
London Math. Soc. Lecture Note Ser., 274, Cambridge Univ. Press, Cambridge, 2000, pp. 67--116.

\bibitem{Milnor1999}
John Milnor,
 Dynamics in One Complex Variable.
 Annals of Mathematics Studies 160.
Princeton University Press, Princeton, NJ, 2006.


\bibitem{Misiurewicz1981}
Micha\l\ Misiurewicz, 
On iterates of $e^z$.
Ergodic Theory Dynam. Systems 1 (1981), no. 1, 103--106. 

\bibitem{Rempe2009}
Lasse Rempe,
Hyperbolic dimension and radial Julia sets of transcendental functions.
Proc. Amer. Math. Soc. 137 (2009), no. 4, 1411--1420.

\bibitem{Rippon1999}
P. J. Rippon and G. M. Stallard, 
Iteration of a class of hyperbolic meromorphic functions. 
Proc. Amer. Math. Soc. 127 (1999), no. 11, 3251--3258. 

\bibitem{Sixsmith2018}
David J. Sixsmith, 
Dynamics in the Eremenko-Lyubich class.
Conform. Geom. Dyn. 22 (2018), 185--224. 

\bibitem{Stallard1994}
G.\ M.\ Stallard, 
The Hausdorff dimension of Julia sets of meromorphic functions. 
J. London Math. Soc. (2) 49 (1994), no. 2, 281--295.
\bibitem{Stallard2000}
 G.\ M.\ Stallard,
The Hausdorff dimension of Julia sets of hyperbolic meromorphic functions. II. 
Ergodic Theory Dynam. Systems 20 (2000), no. 3, 895--910. 
\bibitem{Stallard2000a}
 G.\ M.\ Stallard,
The Hausdorff dimension of Julia sets of entire functions. IV.
J. London Math. Soc. (2) 61 (2000), no. 2, 471--488. 
\bibitem{Stallard2008}
 G.\ M.\ Stallard,
Dimensions of Julia sets of transcendental meromorphic functions.
In ``Transcendental Dynamics and Complex Analysis''.
London Math.\ Soc.\ Lect.\ Note Ser.\ 348.
Edited by P.\ J.\ Rippon
and G.\ M.\ Stallard,
Cambridge Univ.\ Press, Cambridge, 2008, pp.~425--446.

\bibitem{Teichmueller1944}
Oswald Teichm\"uller,
Einfache Beispiele zur Wertverteilungslehre. 
Deut\-sche Math. 7 (1944), 360--368;
Gesammelte Abhandlungen -- Collected Papers,
Springer-Verlag, Berlin, Heidelberg, New York, 1982, pp.~728--736.
\bibitem{Zheng2015}
Jian-Hua Zheng,
Dynamics of hyperbolic meromorphic functions. 
Discrete Contin. Dyn. Syst. 35 (2015), no. 5, 2273--2298. 


\end{thebibliography}
\end{document}